\begin{document}

\title*{SOLO FTRL algorithm for production management with transfer prices}
\author{Dmitry B. Rokhlin and Gennady A. Ougolnitsky}
\institute{D.B. Rokhlin \at I.I.\,Vorovich Institute of Mathematics, Mechanics and Computer Sciences of Southern Federal University and Regional Scientific and Educational Mathematical Center of Southern
Federal University \email{dbrohlin@sfedu.ru}
\and G.A. Ougolnitsky \at I.I.\,Vorovich Institute of Mathematics, Mechanics and Computer Sciences of Southern Federal University \email{gaugolnickiy@sfedu.ru}}
%
%
\maketitle

\abstract{We consider a firm producing and selling $d$ commodities, and consisting from $n$ production and $m$ sales divisions. The firm manager tries to stimulate the best division performance by sequentially selecting internal commodity prices (transfer prices). In the static problem under general strong convexity and compactness assumptions we show that the SOLO FTRL algorithm of Orabona and P{\'a}l (2018), applied to the dual problem, gives the estimates of order $T^{-1/4}$ in the number $T$ of iterations for the optimality gap and feasibility residuals. This algorithm uses only the information on division reactions to current prices. It does not depend on any parameters and requires no information on the production and cost functions. The results of similar nature are obtained for the dynamic problem, where these functions depend on an i.i.d. sequence of random variables. We present two computer experiments with one and two commodities. In the static case the transfer prices and the difference between the supply and demand demonstrate fast stabilization. In the dynamic case the same quantities fluctuate around equilibrium values after a short transition phase.}

\keywords{Transfer prices, Stimulating prices, Dual gradient descent, Online learning}

\section{Introduction} 
Divisions of a large firm frequently act as independent profit maximizing agents. In this paper we consider a firm with several production and sales divisions. The firm management can settle internal prices for the produced commodities to coordinate the division activities. The production (resp., sales) division should sell (resp., buy) commodities at these \emph{transfer prices}, which are valid only within the firm. The aim of such pricing system is to stimulate agent behavior, resulting in the profit maximization of the whole firm. The problem of constructing such pricing policies was formulated in 1950s \cite{Hirshleifer1956}. Now the literature on transfer pricing has is quite rich. We refer to the reviews \cite{Gox2006}, \cite{Baldenius2009}, \cite[Chap. 13]{Labro2019}. The most influential papers related to the topic of transfer pricing were recently collected in \cite{Eden2019}. 

In \cite{Schuster2015} three major types of transfer prices are highlighted: market-based, cost-based and negotiated. Among cost based transfer prices the marginal ones are mathematically most natural, since under the convexity assumption they stimulate the firm-optimal solution. However, this pricing method is frequently criticized as follows: to get the transfer price one should find the firm-optimal solution, which is impossible in practice. Moreover, if for some reason such solution is known, then its related components can simply be communicated to each division by the firm manager. So, there is no need in any pricing system.

In the present paper the revenue and cost functions of the divisions are assumed to be unknown. Instead, the firm manager gets the division responses for the transfer prices. So, the transfer pricing is the only way to indirectly access the agent revenue and cost function. Is it possible, using only the agent responses, to find transfer prices  stimulating approximately firm-optimal solutions? Roughly speaking, we give an affirmative answer with some quantitative estimates. 

Dual gradient descent algorithms, which we utilize for this purpose, are widely used for resource allocation in communication networks \cite{Low1999,Beck2014,Wu2019,Rokhlin2021}. In these algorithms the link prices are updated on the basis of user (or processor) reactions to current prices. The firm profit maximization corresponds to the network utility maximization problem, formulated in \cite{Kelly1998}. Note, however, that usually the mentioned algorithms need some information concerning the user utilities, like Lipschitz constants or strong convexity parameters. In the present paper we try to avoid using such information (note that the intention of \cite{Rokhlin2021} was similar). To this end, we apply the SOLO FTRL algorithm of Orabona, P{\'a}l \cite{Orabona2018} to the dual problem. The resulting updating rule for the transfer price does not depend on any parameters, and requires no information on the production and cost functions.

The paper is organized as follows. In Section \ref{sec:2} we formally describe the static problem, where the revenue and cost functions of the divisions are unknown but fixed. Under convexity and compactness assumptions we show that the firm-optimal solution is stimulated by some transfer price vector (Theorem \ref{th:1}). This is a simple and standard result.
Then, similarly to \cite{Beck2014}, we apply Nesterov's fast gradient descent algorithm to the dual problem and obtain the estimates of order $T^{-1}$ in the number $T$ of iterations for optimality and feasibility residuals (Theorem \ref{th:2}). After this we apply the mentioned SOLO FTRL algorithm. Its convergence rate is slower: the same residuals are of order $T^{-1/4}$ (Theorem \ref{th:3}), but it uses only the information on division reactions to current prices. 

In Section \ref{sec:3} we consider the dynamic problem, where the revenue and cost functions depend on a sequence of i.i.d. random variables. To evaluate the quality of transfer prices we use the average regret: the quantity which is standard in the online learning theory. The main result of the paper (Theorem \ref{th:4}) states that the price sequence generated by the SOLO FTRL algorithm  ensures no-regret learning with respect to the best possible plan sequence, and the equilibrium between the supply and demand is satisfied on average. It also states that the stochastic bounds of order $T^{-1/4}$. Note that the best possible plan sequence is a rather strong comparator.

In Section \ref{sec:4} we present two computer experiments with one and two commodities. In the static case the transfer prices and the difference between the supply and demand demonstrate fast stabilization. In the dynamic case the same quantities fluctuate around equilibrium values after a short transition phase.

\section{Static problem} \label{sec:2}
Consider a firm consisting from $n$ production and $m$ sales divisions. There are $d$ commodities produced by each production division. The same commodities are sold by each sales division. In this section we consider the static problem where $f_i:X_i\mapsto\mathbb R_+$, $i=1,\dots,m$ are the revenue functions of the sales divisions,  $g_i:Y_i\mapsto\mathbb R_+$, $i=1,\dots,n$  are the cost functions of the production divisions, and $X_i$, $Y_i$ are some convex subsets of $\mathbb R^d_+=\{x\ge 0:x\in\mathbb R^d\}$. A vector $x_i\in X_i$ describes the amounts of commodities to be sold by $i$-th sales division, and $y_i\in Y_i$ describes the amounts of commodities to be produced by $i$-th production division. Under an unrealistic assumption that the functions $f_i$, $g_i$ are completely known, the firm manager can solve the profit maximization problem
\begin{align}
 F(x,y)=\sum_{i=1}^m f_i(x_i)-\sum_{i=1}^n g_i(y_i)\to\max_{(x,y)\in S}, \label{1.1}\\
  S=\left\{(x,y)\in Z: \sum_{i=1}^m x_i=\sum_{j=1}^n y_j\right\}, \quad Z=\prod_{i=1}^m X_i\times\prod_{j=1}^n Y_j, \label{1.2}
\end{align}  
and ensure an optimal firm performance by assigning to each division the related component of an optimal solution. The constraint $(x,y)\in S$ requires that the total production equals to the total sales in each commodity: an equilibrium between the supply and demand at the firm level.

Under convexity assumptions there is also a more economic way to achieve the same goal. The firm can announce the commodity \emph{transfer price} vector $\lambda_t\in\mathbb R^d_+$ with the obligation to buy the commodities at these prices from the production divisions, and sell them to the sales divisions. Let us introduce the standing assumptions that will be used in the rest of the paper.

Recall that a function $f:A\mapsto\mathbb R$, defined on a convex set $A$ is called $\sigma$-strongly convex (with some $\sigma>0$)  if
\[ f(\alpha x+(1-\alpha)y)\le\alpha f(x)+(1-\alpha)f(y)-\frac{\sigma}{2}\alpha(1-\alpha)\|x-y\|^2\] 
for all $x,y\in A$, $\alpha\in [0,1]$. By $\|\cdot\|$ we always denote the usual Euclidean norm.

\begin{description}
\item[\bf{Assumption 1.}]  \label{as:1} The sets $X_i$, $Y_i$ are convex, compact, and 
\[ [0,\varepsilon]^d\subset X_i \subset [0,c]^d,\quad [0,\varepsilon]^d\subset Y_i \subset [0,c]^d \]
with some $\varepsilon>0$, $c>0$.
\item[\bf{Assumption 2.}]  \label{as:2} The functions $f_i:X_i\mapsto\mathbb R_+$ (resp., $g_i:Y_i\mapsto\mathbb R_+$) are $\sigma_i'$-strongly concave (resp., $\sigma_i''$-strongly convex), non-decreasing in each argument, and $f_i(0)=g_i(0)=0$.
\item[\bf{Assumption 3.}]  \label{as:3} The functions $f_i$ (resp., $g_i$) are $K_i'$-Lipschitz (resp., $K_i''$-Lipschitz).
\end{description}

The optimal division (agent) reactions are uniquely defined by
\begin{align}
\widetilde x_i(\lambda)&=\arg\max_{x_i\in X_i}(f_i(x_i)-\langle\lambda,x_i\rangle),\quad i=1,\dots,m,\label{1.2A}\\
\widetilde y_i(\lambda)&=\arg\max_{y_i\in Y_i}(\langle\lambda,y_i\rangle-g_i(y_i)),\quad i=1,\dots,n, \label{1.2B}
\end{align}
where $\langle a,b\rangle=\sum_{i=1}^d a_i b_i$ is the usual scalar product. We will say that the plan $\widetilde z(\lambda)=(\widetilde x(\lambda),\widetilde y(\lambda))$ is stimulated by the transfer price vector $\lambda$. The next elementary result shows that the problem (\ref{1.1}), (\ref{1.2}) is equivalent to finding a price vector, stimulating an equilibrium.

\begin{theorem} \label{th:1} An admissible point $z^*=(x^*,y^*)\in S$ is an optimal solution of the problem  (\ref{1.1}), (\ref{1.2}) if and only if it is stimulated by some transfer price vector $\lambda^*\in\mathbb R^d_+$: $z^*=\widetilde z(\lambda^*)$.
\end{theorem}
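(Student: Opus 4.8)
The plan is to treat (\ref{1.1})--(\ref{1.2}) as a strongly concave maximization over the convex compact set $Z$ with a single affine equality constraint, and to exploit its Lagrangian
\[
L(x,y,\lambda)=F(x,y)-\Big\langle\lambda,\ \sum_{i=1}^m x_i-\sum_{j=1}^n y_j\Big\rangle=\sum_{i=1}^m\big(f_i(x_i)-\langle\lambda,x_i\rangle\big)+\sum_{j=1}^n\big(\langle\lambda,y_j\rangle-g_j(y_j)\big),\qquad\lambda\in\mathbb R^d.
\]
The right-hand side separates over divisions, so $\arg\max_{(x,y)\in Z}L(x,y,\lambda)$ is exactly the reaction $\widetilde z(\lambda)$ defined by (\ref{1.2A})--(\ref{1.2B}); by Assumption 2 each summand is strongly concave, so this maximizer is unique. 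The identity that drives everything is that $L(x,y,\lambda)=F(x,y)$ whenever $(x,y)\in S$.

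For sufficiency the identity is all that is needed: if $z^*\in S$ and $z^*=\widetilde z(\lambda^*)$, then for every $(x,y)\in S$
\[
F(x,y)=L(x,y,\lambda^*)\le\max_{(x,y)\in Z}L(x,y,\lambda^*)=L(z^*,\lambda^*)=F(z^*),
\]
so $z^*$ is optimal; note that the sign of $\lambda^*$ plays no role in this direction.

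For necessity I would invoke Lagrangian strong duality. By Assumption 1 the set $Z$ is full-dimensional and the affine manifold $\{\sum_i x_i=\sum_j y_j\}$ passes through its interior (take all $x_i,y_j$ proportional to $\mathbf 1$ with small coefficients in $(0,\varepsilon)$), which is the qualification needed for an affine equality constraint; moreover the cubes $[0,\varepsilon]^d$ inside the $X_i,Y_j$ render the dual function $D(\lambda)=\max_Z L(\cdot,\cdot,\lambda)$ coercive, hence there is a dual optimal $\lambda^*\in\mathbb R^d$ with $D(\lambda^*)=F(z^*)$. Then $L(\widetilde z(\lambda^*),\lambda^*)=D(\lambda^*)=F(z^*)=L(z^*,\lambda^*)$, the last equality because $z^*\in S$, so $z^*$ and $\widetilde z(\lambda^*)$ are both maximizers of $L(\cdot,\cdot,\lambda^*)$ over $Z$; by uniqueness, $z^*=\widetilde z(\lambda^*)$.

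The only genuinely non-formal point, and the place where the monotonicity in Assumption 2 is essential, is to upgrade $\lambda^*\in\mathbb R^d$ to $\lambda^*\in\mathbb R^d_+$. I would do this through the perturbation function $v(b)=\max\{F(x,y):(x,y)\in Z,\ \sum_i x_i-\sum_j y_j=b\}$, which is concave, finite near $b=0$ (since $\sum_i X_i-\sum_j Y_j\supseteq[-n\varepsilon,m\varepsilon]^d$), and satisfies $\lambda^*\in\partial v(0)$; because the $f_i$ and the $g_j$ are non-decreasing, $v$ is non-decreasing near $0$ (selling more while producing less cannot hurt the firm), and a non-decreasing concave function has only non-negative supergradients, so $\lambda^*\ge 0$. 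Equivalently one may replace the equality by $\sum_i x_i\le\sum_j y_j$, note that this relaxation has the same optimal value by monotonicity, and read off a non-negative KKT multiplier. The bookkeeping inside this last step — moving a point back onto the equilibrium surface without decreasing $F$ while remaining in $\prod_i X_i\times\prod_j Y_j$ — is exactly where the inclusions $[0,\varepsilon]^d\subset X_i,Y_i$ and convexity (the segments toward $0$) are used, and I expect it to be the main, though minor, obstacle in making the argument fully rigorous.
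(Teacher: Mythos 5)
Your sufficiency argument and the core of your necessity argument coincide with the paper's: the identity $L(x,y,\lambda)=F(x,y)$ on $S$ gives the "if" direction, and for "only if" the paper likewise invokes a saddle point of the Lagrangian (via \cite[Corollary 28.3.1]{Rockafellar1970}, for which your relative-interior point with all coordinates in $(0,\varepsilon)$ is exactly the needed qualification) and then identifies $z^*$ with $\widetilde z(\lambda^*)$ through uniqueness of the maximizer of the strongly concave $L(\cdot,\cdot,\lambda^*)$. Where you genuinely diverge is the sign of $\lambda^*$. You try to manufacture a non-negative multiplier \emph{a priori}, via monotonicity of the perturbation function $v(b)$ (or by relaxing the equality to an inequality), and you correctly flag that the bookkeeping — perturbing one coordinate of one $x_i$ or $y_j$ while staying inside a general convex $X_i$ or $Y_j$ — is the sticking point; for sets that are merely convex, compact and sandwiched between $[0,\varepsilon]^d$ and $[0,c]^d$, moving a single coordinate can indeed leave the set, so that step is not free. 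The paper instead derives the sign \emph{a posteriori}: having already established $z^*=\widetilde z(\lambda^*)$, it observes that a component $\lambda_j^*<0$ would force every sales reaction to satisfy $\widetilde x_{i,j}(\lambda^*)\ge\varepsilon$ and every production reaction to satisfy $\widetilde y_{i,j}(\lambda^*)=0$ (the agents' objectives are strictly increasing, resp.\ decreasing, in that coordinate), which is incompatible with the equilibrium constraint $\sum_i\widetilde x_{i,j}=\sum_i\widetilde y_{i,j}$ that $z^*\in S$ must satisfy. This order of operations sidesteps the perturbation function entirely and uses the $\varepsilon$-cube only to pin down the agents' reactions, which is both shorter and closer to closing the gap you identified; if you keep your route, you should either assume the $X_i$, $Y_j$ are downward closed (free disposal) or switch to the paper's contradiction argument for this final step.
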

\begin{proof} Consider the Lagrange function
\begin{align} \label{1.3}
L(x,y,\lambda)=F(x,y)+\sum_{i=1}^n\langle\lambda,y_i\rangle-\sum_{i=1}^m\langle\lambda,x_i\rangle.
\end{align}
If $\widetilde z(\lambda^*)\in S$, then it is an optimal solution of  (\ref{1.1}), (\ref{1.2}):
\begin{align}
F(\widetilde z(\lambda^*)) &=L(\widetilde z(\lambda^*),\lambda^*)\nonumber\\
&=\sum_{i=1}^m(f_i(\widetilde x_i(\lambda^*))-\langle\lambda^*,\widetilde x_i(\lambda^*)\rangle)+\sum_{i=1}^n(\langle\lambda^*,\widetilde y_i(\lambda^*)\rangle-g_i(\widetilde y_i(\lambda^*)))\nonumber\\
&\ge \sum_{i=1}^m(f_i(x_i)-\langle\lambda^*,x_i\rangle)+\sum_{i=1}^n(\langle\lambda^*,y_i\rangle-g_i(y_i))\nonumber\\
&\ge F(x,y)+\left\langle\lambda^*,\sum_{i=1}^n y_i-\sum_{i=1}^m x_i\right\rangle
=F(x,y),\quad (x,y)\in S. \label{1.4}
\end{align}

Conversely, if $z^*=(x^*,y^*)$ is an optimal solution of the mentioned problem, then by a version of the Karush-Kuhn-Tucker theorem \cite[Corollary 28.3.1]{Rockafellar1970}, there exists a vector $\lambda^*\in\mathbb R^d$ such that $(z^*,\lambda^*)$ is a saddle point of the Lagrange function:
\[ L(x^*,y^*,\lambda)\ge L(x^*,y^*,\lambda^*)\ge L(x,y,\lambda^*),\quad (x,y)\in\prod_{i=1}^m X_i\times\prod_{j=1}^n Y_j,\quad \lambda\in\mathbb R^d.\]
From the right inequality, which is similar to (\ref{1.4}), it follows that $z^*=\widetilde z(\lambda^*)$. If $\lambda^*$ has a negative component $\lambda_j^*<0$, then 
\[ \widetilde x_{i,j}(\lambda^*)\ge\varepsilon,\ i=1,\dots,m,\quad \widetilde y_{i,j}(\lambda^*)=0,\ i=1,\dots,n\]
for the same components $\widetilde x_{i,j}$, $\widetilde y_{i,j}$ of the vectors  $\widetilde x_i$, $\widetilde y_i$, since the functions $f_i$, $g_i$ are non-decreasing in each variable. But this contradicts to the equilibrium condition (in commodity $j$), which $(x^*,y^*)$ should satisfy. Thus, $\lambda^*\ge 0$. The proof is complete. $\square$
\end{proof}

The price vector $\lambda^*$, mentioned in Theorem \ref{th:1}, is an optimal solution of the dual minimization problem 
\begin{align} \label{1.5}
G(\lambda):=\sup_{(x,y)\in X\times Y} L(x,y,\lambda)=\sum_{i=1}^m v_i(\lambda)-\sum_{i=1}^{n} u_i(\lambda)\to\min_{\lambda\in\mathbb R^d},
\end{align}
where $v_i$ and $u_i$ are Fenchel convex and concave conjugates of $g_i$ and $f_i$ respectively \cite[Chap.\,2]{Barbu2012}:
\[ v_i(\lambda)=\sup_{y_i\in Y_i}(\langle\lambda,y_i)-g_i(y_i)),\quad u_i(\lambda)=\inf_{x_i\in X_i}(\langle\lambda,x_i\rangle - f_i(x_i)).\]
The dual problem (\ref{1.5}) is solvable and there is no duality gap: $F(x^*,y^*)=G(\lambda^*)$ \cite[Theorem A.1]{Beck2017}.

Is is easy to see that the function $-F$ is strongly convex on $Z$ with parameter 
\begin{align} \label{1.6}
 \sigma=\min\left\{\min_{1\le i\le m}\sigma_i', \min_{1\le i\le n}\sigma_i''\right\}. 
\end{align} 
The Lagrange function (\ref{1.3}) is $\sigma$-strongly concave in $(x,y)$. Hence, for
\[ \widetilde z(\lambda)=\arg\max_{z\in Z} L(z,\lambda)\]
we have (see \cite[Theorem 5.25]{Beck2017})
\[ L(\widetilde z(\lambda),\lambda)-L(z,\lambda)\ge\frac{\sigma}{2}\|z-\widetilde z(\lambda)\|^2.  \]
Since $G(\lambda)=L(\widetilde z(\lambda),\lambda)$, $G(\lambda^*)=F(z^*)=L(z^*,\lambda)$, by putting $z=z^*$, we get
\begin{align} \label{1.7}
 G(\lambda)-G(\lambda^*)\ge\frac{\sigma}{2}\|z^*-\widetilde z(\lambda)\|^2.
\end{align} 

Furthermore, $F$ is $K$-Lipshitz:
\begin{align}
|F(z)-F(z')| & \le K\|z-z'\|,\quad z,z'\in Z, \label{1.8}\\
K &=\left(\sum_{i=1}^m (K_i')^2+\sum_{i=1}^n (K_i'')^2\right)^{1/2}. \label{1.9}
\end{align}
 From (\ref{1.8}), (\ref{1.7}) it follows that
\begin{align} \label{1.10}
 |F(\widetilde z(\lambda))-F(z^*)|\le K\|\widetilde z(\lambda))-z^*\|\le K\sqrt{\frac{2}{\sigma} (G(\lambda)-G(\lambda^*))}.
\end{align}

Recall that a function $f:A\mapsto\mathbb R$ is called $\kappa$-smooth if
\[ \| \nabla f(x)-\nabla f(y)\|\le\kappa\|x-y\|,\quad x,y\in A.\]
From Assumption 2, by Theorem 5.26 of \cite{Beck2017}, it follows that the functions $u_i$ are $1/\sigma_i'$-smooth, and $v_i$ are $1/\sigma_i''$-smooth. It easily follows that $G$ is also smooth with the smoothness parameter
\begin{align} \label{1.11}
 \kappa=\sum_{i=1}^m\frac{1}{\sigma_i'}+\sum_{i=1}^n\frac{1}{\sigma_i''}.
\end{align} 
Therefore (see \cite[Theorem 5.8]{Beck2017}),
\begin{align} \label{1.12}
G(\lambda)-G(\lambda')\ge\langle\nabla G(\lambda'),\lambda-\lambda'\rangle+\frac{1}{2\kappa}\|\nabla G(\lambda)-\nabla G(\lambda') \|^2.
\end{align}
Furthermore, since 
\begin{align} \label{1.13}
 \nabla v_i(\lambda)=\widetilde y_i(\lambda),\quad \nabla u_i(\lambda)=\widetilde x_i(\lambda),
 \end{align}
(see \cite[Corollary 4.21]{Beck2017}), we have
\[ \nabla G(\lambda)=\Delta \widetilde z(\lambda):=\sum_{i=1}^n\widetilde y_i(\lambda)-\sum_{i=1}^m\widetilde x_i(\lambda). \]
Putting $\lambda'=\lambda^*$, from (\ref{1.12}) we get
\begin{align} \label{1.14}
 \|\Delta\widetilde z(\lambda)\|\le\sqrt{2\kappa(G(\lambda)-G(\lambda^*))},
\end{align} 
since $\nabla G(\lambda^*)=0$, $\Delta\widetilde z(\lambda^*)=0$. 

The inequalities (\ref{1.10}), (\ref{1.14}) show that the difference $G(\lambda)-G(\lambda^*)$ controls the optimality gap $F(z^*)-F(\widetilde z(\lambda))$ as well as the feasibility residual $\Delta\widetilde z(\lambda)$ of the plan $\widetilde z(\lambda)$, stimulated by $\lambda$. So, to get an approximately optimal performance, it is enough to approximately solve the dual problem. 

Let us consider the gradient descent algorithm with some step sizes $\eta_t>0$:
\begin{align} \label{1.15}
 \lambda_{t+1}=\lambda_t-\eta_t\nabla G(\lambda_t)=\lambda_t-\eta_t\Delta \widetilde z(\lambda_t).
\end{align} 
Observing the agent reactions to the transfer price vector $\lambda_t$, the firm manager can iteratively update $\lambda_t$ according to the law of supply and demand: if $\Delta \widetilde z_j(\lambda_t)>0$, then the supply in $j$-th commodity is greater than the demand and the price $\lambda_{t,j}$ decreases, and if $\Delta \widetilde z_j(\lambda_t)<0$, the price $\lambda_{t,j}$ goes up.

The are many results concerning the rate of convergence of the gradient descent method, but they require some additional knowledge concerning $G$, and hence the revenue and cost functions of the agents. To get fast convergence rate let us apply Nesterov's fast gradient descent algorithm \cite{Nesterov1983} in the form considered in \cite{Su2016}:
\begin{align}
\lambda_t &=\mu_{t-1}-\eta\nabla G(\mu_{t-1})=\mu_{t-1}-\eta\Delta \widetilde z(\mu_{t-1}), \label{1.16}\\
\mu_t &=\lambda_t+\frac{t-1}{t+2}(\lambda_t-\lambda_{t-1}),\quad \mu_0=\lambda_0\in\mathbb R^d. \label{1.17}
\end{align}
Note that this algorithm comes from a slightly general family than (\ref{1.15}). If $\eta\le 1/\kappa$ then (see \cite{Su2016})
\begin{align} \label{1.18}
 G(\lambda_t)-G(\lambda^*)\le \frac{2\|\lambda_0-\lambda^*\|^2}{\eta(t+1)^2}. 
\end{align}

From (\ref{1.10}), (\ref{1.14}), (\ref{1.18}) we get the following result.
\begin{theorem} \label{th:2}
If $\eta\in (0,1/\kappa]$, then for the transfer price sequence $\lambda_t$, generated by the fast gradient descent algorithm (\ref{1.16}), (\ref{1.17}), we have
\begin{align*}
|F(\widetilde z(\lambda_t))-F(z^*)| &\le \frac{2K}{\sqrt{\sigma\eta}}\frac{\|\lambda_0-\lambda^*\|}{t+1},\\
\|\Delta\widetilde z(\lambda_t)\| &\le 2\sqrt\frac{\kappa}{\eta}\frac{\|\lambda_0-\lambda^*\|}{t+1}.
\end{align*}
where the constants $\sigma$, $K$, $\kappa$ are defined by (\ref{1.6}), (\ref{1.9}), (\ref{1.11}).
\end{theorem}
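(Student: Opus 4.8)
The plan is simply to chain together the three estimates already in hand: the accelerated-gradient convergence rate (\ref{1.18}) for the dual objective along the Nesterov iterates, and the two sensitivity bounds (\ref{1.10}) and (\ref{1.14}), which convert a small dual suboptimality $G(\lambda)-G(\lambda^*)$ into a small optimality gap and a small feasibility residual for the stimulated plan $\widetilde z(\lambda)$. The hypothesis $\eta\in(0,1/\kappa]$, with $\kappa$ given by (\ref{1.11}), is exactly the step-size condition under which (\ref{1.18}) holds, since $G$ is $\kappa$-smooth (as established just before (\ref{1.12})); so nothing further needs to be checked before invoking it.

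First I would evaluate (\ref{1.18}) at $\lambda=\lambda_t$ and substitute the resulting bound on $G(\lambda_t)-G(\lambda^*)$ into (\ref{1.10}), obtaining
\[ |F(\widetilde z(\lambda_t))-F(z^*)|\le K\sqrt{\frac{2}{\sigma}\cdot\frac{2\|\lambda_0-\lambda^*\|^2}{\eta(t+1)^2}}=\frac{2K}{\sqrt{\sigma\eta}}\,\frac{\|\lambda_0-\lambda^*\|}{t+1}, \]
which is the first claimed inequality. The same substitution into (\ref{1.14}) gives
\[ \|\Delta\widetilde z(\lambda_t)\|\le\sqrt{2\kappa\cdot\frac{2\|\lambda_0-\lambda^*\|^2}{\eta(t+1)^2}}=2\sqrt{\frac{\kappa}{\eta}}\,\frac{\|\lambda_0-\lambda^*\|}{t+1}, \]
the second claimed inequality. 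The only manipulations are combining the constants under the radicals and taking square roots.

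There is no substantive obstacle: the theorem is a bookkeeping corollary of (\ref{1.10}), (\ref{1.14}), (\ref{1.18}), whose derivations carry the real content (zero duality gap, $\sigma$-strong concavity of the Lagrangian in $(x,y)$ passing to $\kappa$-smoothness of the dual $G$, and the accelerated rate of \cite{Su2016}). The points worth double-checking are merely notational: that $\widetilde z(\lambda_t)$ in the statement is the maximizer of $L(\cdot,\lambda_t)$ over $Z$ used to derive (\ref{1.10}) and (\ref{1.14}) — it is, by (\ref{1.2A}), (\ref{1.2B}) — and that $\lambda^*$ is a minimizer of the dual, so that $G(\lambda^*)=F(z^*)$ and $\nabla G(\lambda^*)=0$, exactly as assumed there.
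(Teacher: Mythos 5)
Your argument is exactly the paper's: Theorem \ref{th:2} is obtained by substituting the accelerated rate (\ref{1.18}) into the sensitivity bounds (\ref{1.10}) and (\ref{1.14}), and your constant arithmetic checks out. Nothing is missing.
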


To implement the algorithm (\ref{1.16}), (\ref{1.17}) one needs to know the smoothness parameter $\kappa$ of $G$. However, in practice it is unknown. In what follows we show that it is possible to construct a convergent algorithm without any parameter knowledge by sacrificing a large amount of convergence rate. To this end consider the SOLO FTRL (Scale-free Online Linear Optimization Follow The  Regularized Leader) algorithm of \cite{Orabona2018}: 
\begin{align}
 \lambda_0&=0,\quad L_0=0, \nonumber\\
\lambda_t &=\arg\min_{\lambda\in\mathbb R^d}\left( \langle L_{t-1},\lambda\rangle+\sqrt{\sum_{j=1}^{t-1} \|\nabla G(\lambda_j)\|^2}\cdot\|\lambda\|^2/2\right), \quad t\ge 1, \label{1.21}\\
L_t &=L_{t-1}+\nabla G(\lambda_t). \nonumber
\end{align}
Solving the optimization problem (\ref{1.21}), we get
\begin{align} \label{1.22}
\lambda_t=-\frac{L_{t-1}}{\sqrt{\sum_{j=1}^{t-1} \|\nabla G(\lambda_j)\|^2}}=-\frac{\sum_{j=1}^{t-1}\Delta\widetilde z(\lambda_j)}{\sqrt{\sum_{j=1}^{t-1} \|\Delta\widetilde z(\lambda_j)\|^2}},\quad  \lambda_0=0.
\end{align}
Note that if $\|\Delta\widetilde z(\lambda_0)\|^2=0$, then $\lambda_0$ is an optimal solution, which is not the case for our problem.

In the online learning theory the quantity 
\[\mathscr R_T(\lambda)=\sum_{t=1}^T(G(\lambda_t)-G(\lambda))\]
is called the regret of an algorithm, generating $\lambda_t$, with respect to a fixed decision $\lambda$. 
Using the convexity of $G$, we can bound the regret as follows:
\[ \mathscr R_T(\lambda)\le\sum_{t=1}^T \langle\nabla G(\lambda_t),\lambda_t\rangle-\sum_{t=1}^T\langle\nabla G(\lambda_t),\lambda\rangle. \]
This simple and well-known linearization argument reduces any online convex optimization problem to the online linear optimization problem. According to \cite[Theorem 1]{Orabona2018} we have the following bound for the regret of the algorithm (\ref{1.22}):
\begin{equation} \label{1.23}
 \mathscr R_T(\lambda)\le\left(\|\lambda\|^2/2+2.75 \right)\sqrt{\sum_{t=1}^T \|\nabla G(\lambda_t)\|^2}+3.5\sqrt{T-1} \max_{t\le T}\|\nabla G(\lambda_t) \|. 
\end{equation} 

Let us estimate $\nabla G(\lambda)$:
\begin{align}
\|\nabla G(\lambda)\| &\le\sum_{i=1}^n\|\nabla v_i(\lambda)\|+\sum_{i=1}^m\|\nabla u_i(\lambda)\|=\sum_{i=1}^n\|\widetilde y_i(\lambda)\|+\sum_{i=1}^m\|\widetilde x_i(\lambda)\|\| \nonumber\\
& \le  (m+n)c\sqrt d. \label{1.24}
\end{align}
Here we used (\ref{1.13}) and Assumption 1, which allows to estimate the diameter of any set $X_i$, $Y_i$ by $c\sqrt d$. For 
\begin{align} \label{1.25}
\overline\lambda_T=\frac{1}{T}\sum_{t=1}^T \lambda_t
\end{align}
using the convexity of $G$ and the inequalities (\ref{1.23}), (\ref{1.24}), we get
\begin{align} \label{1.26}
G(\overline\lambda_T)-G(\lambda^*)&\le\frac{1}{T}\sum_{t=1}^T(G(\lambda_t)-G(\lambda^*))=\frac{\mathscr R_T(\lambda^*)}{T}\nonumber\\
&\le\frac{(m+n)c\sqrt d}{\sqrt T}\left(\frac{\|\lambda^*\|^2}{2}+6.25 \right).
\end{align}

From the last inequality and (\ref{1.10}), (\ref{1.14}) we obtain the following result.
\begin{theorem} \label{th:3}
For the average transfer price vector (\ref{1.25}), generated by the SOLO FTRL algorithm (\ref{1.22}), we have
\begin{align*}
  F(z^*)-F(\widetilde z(\overline\lambda_T)) & \le K\sqrt{\frac{(m+n)c}{\sigma}}\sqrt{\|\lambda^*\|^2+12.5}\frac{d^{1/4}}{T^{1/4}},\\
 \|\Delta\widetilde z(\overline\lambda_T)\| &\le \sqrt{\kappa(m+n)c}\sqrt{\|\lambda^*\|^2+12.5}\frac{d^{1/4}}{T^{1/4}}.
\end{align*}
\end{theorem}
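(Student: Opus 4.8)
The plan is to simply chain together the two deterministic estimates \eqref{1.10} and \eqref{1.14}, which bound the optimality gap and the feasibility residual of the plan stimulated by an arbitrary price vector $\lambda$ in terms of the dual suboptimality $G(\lambda)-G(\lambda^*)$, with the regret-based bound \eqref{1.26} on $G(\overline\lambda_T)-G(\lambda^*)$. The only point worth verifying is that \eqref{1.10} and \eqref{1.14} may be applied at the particular point $\lambda=\overline\lambda_T$: this is legitimate because those inequalities were derived from the $\sigma$-strong concavity of $L(\cdot,\lambda)$ and the $\kappa$-smoothness of $G$ for \emph{every} $\lambda\in\mathbb R^d$, with no sign restriction, even though the SOLO FTRL iterates $\lambda_t$ from \eqref{1.22} (and hence their average $\overline\lambda_T$ in \eqref{1.25}) need not lie in $\mathbb R^d_+$.

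Concretely, I would first rewrite \eqref{1.26} as $G(\overline\lambda_T)-G(\lambda^*)\le \tfrac{1}{2}(m+n)c\sqrt d\,(\|\lambda^*\|^2+12.5)\,T^{-1/2}$. Plugging this into \eqref{1.10} with $\lambda=\overline\lambda_T$ gives $F(z^*)-F(\widetilde z(\overline\lambda_T))\le K\sqrt{(2/\sigma)(G(\overline\lambda_T)-G(\lambda^*))}\le K\sqrt{((m+n)c/\sigma)(\|\lambda^*\|^2+12.5)}\cdot d^{1/4}T^{-1/4}$, where the last step uses the elementary identity $(\sqrt d/\sqrt T)^{1/2}=d^{1/4}/T^{1/4}$ to pull the $T$- and $d$-dependence out of the square root. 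Similarly, \eqref{1.14} with $\lambda=\overline\lambda_T$ yields $\|\Delta\widetilde z(\overline\lambda_T)\|\le\sqrt{2\kappa(G(\overline\lambda_T)-G(\lambda^*))}\le\sqrt{\kappa(m+n)c(\|\lambda^*\|^2+12.5)}\cdot d^{1/4}T^{-1/4}$. These are exactly the two asserted bounds.

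There is essentially no obstacle here: the conceptual content was already carried out in establishing \eqref{1.10}, \eqref{1.14} (strong duality together with strong convexity/smoothness of the primal--dual pair) and \eqref{1.26} (the Orabona--P\'al regret bound \eqref{1.23} combined with the uniform gradient estimate \eqref{1.24} and convexity of $G$); the present theorem is just the bookkeeping step that assembles them and propagates the constants. The only thing requiring a moment's care is the constant arithmetic, namely that $(2/\sigma)\cdot\tfrac12(\|\lambda^*\|^2+12.5)=(1/\sigma)(\|\lambda^*\|^2+12.5)$ and $2\kappa\cdot\tfrac12=\kappa$, together with keeping track of the factor $\sqrt d$ inside the root so that it contributes $d^{1/4}$ to the final rate.
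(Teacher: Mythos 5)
Your proposal is correct and follows exactly the paper's own route: the theorem is obtained by substituting the regret bound (\ref{1.26}) into the deterministic estimates (\ref{1.10}) and (\ref{1.14}) at $\lambda=\overline\lambda_T$, and your constant bookkeeping ($\tfrac{2}{\sigma}\cdot\tfrac12$, $2\kappa\cdot\tfrac12$, and the $\sqrt d/\sqrt T$ inside the root giving $d^{1/4}/T^{1/4}$) reproduces the stated bounds. Your observation that (\ref{1.10}) and (\ref{1.14}) apply to arbitrary $\lambda\in\mathbb R^d$, not just nonnegative prices, is a valid and worthwhile sanity check, since the SOLO FTRL iterates need not be nonnegative.
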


So, for optimality and feasibility residuals we have the estimates of order $T^{-1/4}$ instead of the estimates of order $T^{-1}$ of the fast gradient descent algorithm. However the SOLO FTRL algorithm requires no knowledge about the revenue and cost functions, and contains no parameters.

In addition, the following lemma show that the iterations $\lambda_t$ are uniformly bounded. This result will be useful in the next section.
\begin{lemma} \label{lem:1}
The iterations (\ref{1.21}) satisfy the inequalities
\[ -1\le \lambda_{t,k}\le K'+1,\quad K'=\max_{i=1,\dots,m}K_i'.   \]
\end{lemma}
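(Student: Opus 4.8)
The plan is to exploit the closed form \eqref{1.22}, which exhibits each iterate as a normalized cumulative ``supply minus demand'' vector, to turn it into a one--step recursion, and then to prove the two--sided bound by induction on $t$. The only genuinely problem-specific input will be two elementary sign properties of the stimulated reactions, of the same flavour as those already used in the proof of Theorem \ref{th:1}.

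First I would set up notation: put $D_t:=\sqrt{\sum_j\|\Delta\widetilde z(\lambda_j)\|^2}$ for the normalizing factor in \eqref{1.22}, so that $\lambda_t=-L_{t-1}/D_{t-1}$ with $L_{t-1}=\sum_j\Delta\widetilde z(\lambda_j)$. Since $L_{t-1}=L_{t-2}+\Delta\widetilde z(\lambda_{t-1})$ and $L_{t-2}=-D_{t-2}\lambda_{t-1}$, reading off the $k$-th component gives the recursion
\[
\lambda_{t,k}=\frac{D_{t-2}\,\lambda_{t-1,k}-\Delta\widetilde z_k(\lambda_{t-1})}{D_{t-1}},
\]
where $0\le D_{t-2}\le D_{t-1}$ and $D_{t-1}\ge|\Delta\widetilde z_k(\lambda_{t-1})|$, the last inequality because $D_{t-1}^2$ contains the summand $\|\Delta\widetilde z(\lambda_{t-1})\|^2\ge\Delta\widetilde z_k(\lambda_{t-1})^2$.

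Next I would record the sign properties. If $\lambda_k<0$, then, since each $g_i$ is non-decreasing, the maximizer in \eqref{1.2B} has $\widetilde y_{i,k}(\lambda)=0$ for every production division $i$, so that $\Delta\widetilde z_k(\lambda)=-\sum_i\widetilde x_{i,k}(\lambda)\le 0$. Symmetrically, if $\lambda_k>K'$, then, since each $f_i$ is non-decreasing and $K_i'$-Lipschitz (reducing the $k$-th sold quantity to zero costs at most $K_i'$ per unit of revenue but saves $\lambda_k>K_i'$ per unit), the maximizer in \eqref{1.2A} has $\widetilde x_{i,k}(\lambda)=0$ for every sales division $i$, so that $\Delta\widetilde z_k(\lambda)=\sum_i\widetilde y_{i,k}(\lambda)\ge 0$. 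Both computations also use $0\in X_i$, $0\in Y_i$, which holds by Assumption 1.

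Now induct on $t$ for the claim ``$-1\le\lambda_{t,k}\le K'+1$ for all $k$''; the base case ($\lambda_0=0$, and the first nontrivial iterate has norm $1$) is immediate. In the inductive step, fix $k$ and write $a=\lambda_{t-1,k}\in[-1,K'+1]$, $b=\Delta\widetilde z_k(\lambda_{t-1})$, $p=D_{t-2}$, $q=D_{t-1}$, so $\lambda_{t,k}=(pa-b)/q$ with $0\le p\le q$ and $|b|\le q$. For the upper bound: if $a\le K'$ then $pa-b\le K'q+q=(K'+1)q$; if $a>K'$ then $b\ge 0$ by the sign property, so $pa-b\le pa\le(K'+1)q$. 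For the lower bound: if $a\ge 0$ then $pa-b\ge -b\ge -q$; if $a<0$ then $b\le 0$, so $pa-b\ge pa\ge -p\ge -q$. Hence $-q\le pa-b\le(K'+1)q$, i.e.\ $\lambda_{t,k}\in[-1,K'+1]$, which closes the induction. I expect the main obstacle to be exactly these two sign properties --- that a negative transfer price shuts down production and a price above the revenue-Lipschitz bound $K'$ shuts down sales of that commodity --- since the rest is bookkeeping; the ``$+1$'' slack is nothing but the contribution $-\Delta\widetilde z_k(\lambda_{t-1})/D_{t-1}$ of the single latest step, whose modulus never exceeds $1$.
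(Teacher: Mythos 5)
Your proof is correct and follows essentially the same route as the paper: the same one-step recursion obtained from the closed form (the paper's equation (\ref{1.28})), the same two sign properties (a negative $k$-th price forces $\widetilde y_{i,k}=0$, a price above $K'$ forces $\widetilde x_{i,k}=0$ via the Lipschitz bound), and the same case split on whether $\lambda_{t-1,k}$ lies in the interior range or near the boundary. Your bookkeeping with $p\le q$ and $|b|\le q$ is a slightly cleaner packaging of the paper's argument, but it is the same proof.
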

\begin{proof}
The inequalities are true for $\lambda_0=0$. Assume that they are satisfied for $\lambda_t$. 

(i) Lower bound. Let $\lambda_{t,k}\in [-1,0]$. Since the functions $f_i$, $g_i$ are non-decreasing in each argument, from the definitions of $\widetilde x_i(\lambda)$, $\widetilde y_i(\lambda)$ it follows that 
\[ \widetilde x_{i,k}(\lambda_t)=c,\quad \widetilde y_{i,k}(\lambda_t)=0.\]
Note that $\Delta\widetilde z_k(\lambda_t)=\sum_{i=1}^n\widetilde y_{i,k}(\lambda)-\sum_{i=1}^m\widetilde x_{i,k}(\lambda)=-mc$ and
\begin{equation} \label{1.28}
\lambda_{t+1,k} =\lambda_{t,k}\sqrt{\frac{\sum_{j=1}^{t-1} \|\Delta \widetilde z(\lambda_j)\|^2}{\sum_{j=1}^t \|\Delta \widetilde z(\lambda_j)\|^2}}-\frac{\Delta\widetilde z_k(\lambda_t)}{\sqrt{\sum_{j=1}^t \|\Delta \widetilde z(\lambda_j)\|^2}}.
\end{equation}
Hence, $\lambda_{t+1,k} \ge  \lambda_{t,k}\ge -1.$ 

If $\lambda_{t,k}\ge 0$, then from (\ref{1.28}) we get
\[ \lambda_{t+1,k}\ge -\frac{\Delta\widetilde z_k(\lambda_t)}{\sqrt{\sum_{j=1}^t \|\Delta \widetilde z(\lambda_j)\|^2}}\ge - \frac{|\Delta\widetilde z_k(\lambda_t)|}{\sqrt{\sum_{j=1}^t \|\Delta \widetilde z(\lambda_j)\|^2}}\ge -1. \]

(ii) Upper bound. Let $\lambda_{t,k}\in (K',K'+1]$. Without loss of generality assume that $k=1$. If $\widetilde x_{t,1}(\lambda_t)>0$, then
\[ f_i(\widetilde x_{i,1}(\lambda_t),\widetilde x_{i,2}(\lambda_t),\dots,\widetilde x_{i,m}(\lambda_t))-\lambda_{t,1}\widetilde x_{i,1}(\lambda_t)\ge f_i(0,\widetilde x_{i,2}(\lambda_t),\dots,\widetilde x_{i,m}(\lambda_t)),\]
and we get a contradiction: $\lambda_{t,1}\le K'$, since
\begin{align*}
\lambda_{t,1}\widetilde x_{i,1}(\lambda_t) &\le f_i(\widetilde x_{i,1}(\lambda_t),\widetilde x_{i,2}(\lambda_t),\dots,\widetilde x_{i,m}(\lambda_t))-f_i(0,\widetilde x_{i,2}(\lambda_t),\dots,\widetilde x_{i,m}(\lambda_t))\\
&\le K_i'|\widetilde x_{i,1}(\lambda_t)| \le K' \widetilde x_{i,1}(\lambda_t).
\end{align*}
Thus, $\widetilde x_{t,1}(\lambda_t)=0$ and $\Delta\widetilde z_1(\lambda_t)\ge 0$. Now from (\ref{1.28}) it follows that $\lambda_{t+1,1}\le\lambda_{t,1}\le K'$.

If $\lambda_{t,k}\in [0,K']$, then again from (\ref{1.28}) we get
\[ \lambda_{t+1,k}\le\lambda_{t,k}+ \frac{|\Delta\widetilde z_k(\lambda_t)|}{\sqrt{\sum_{j=1}^t \|\Delta \widetilde z(\lambda_j)\|^2}}\le K'+1.\quad \square\]
\end{proof}

\section{Dynamic problem} \label{sec:3}
Consider a sequence of time dependent revenue and cost functions
\[ f_{t,i}:X_i\mapsto\mathbb R_+,\quad g_{t,i}:Y_i\mapsto\mathbb R_+,  \]
and the sequence of profit maximization problems
\begin{align*}
 F_t(x,y)=\sum_{i=1}^m f_{t,i}(x_i)-\sum_{i=1}^n g_{t,i}(y_i)\to\max_{(x,y)\in S}, 
\end{align*}  
where $S$ is defined by (\ref{1.2}). For notational simplicity we assume that $X_i$, $Y_i$ do not depend on $t$. Furthermore, we assume that the Assumptions 1\,--\,3 are still satisfied for $f_{t,i}$, $g_{t,i}$ instead of $f_i$, $g_i$, wherein all constants $\varepsilon$, $c$, $\sigma_i'$, $\sigma_i''$, $K_i'$, $K_i''$ are independent of $t$.

As in the static case, put $\widetilde z_t(\lambda)=(\widetilde x_t(\lambda),\widetilde y_t(\lambda))$,
\begin{align*}
\widetilde x_{t,i}(\lambda)&=\arg\max_{x_i\in X_i}(f_{t,i}(x_i)-\langle\lambda,x_i\rangle),\quad i=1,\dots,m,\\
\widetilde y_{t,i}(\lambda)&=\arg\max_{y_i\in Y_i}(\langle\lambda,y_i\rangle-g_{t,i}(y_i)),\quad i=1,\dots,n. 
\end{align*}
Applying the SOLO FTRL algorithm (\ref{1.22}) to $G_t$ instead of $G$,  by \cite[Theorem 1]{Orabona2018} we get the estimate similar to (\ref{1.26}):
\begin{align} \label{2.4A}
\mathscr R_T(\lambda)=\sum_{t=1}^T(G_t(\lambda_t)-G_t(\lambda))\le (m+n)c\sqrt d\left(\frac{\|\lambda\|^2}{2}+6.25 \right)\sqrt T,
\end{align} 
since $\nabla G_t(\lambda)$ still satisfies (\ref{1.24})
\begin{align}
\|\nabla G_t(\lambda)\| \le  (m+n)c\sqrt d. \label{2.4}
\end{align}

We want to estimate the regret with respect to the best possible plan sequence $z_t^*$:
\[  \sum_{t=1}^T(F_t(z_t^*)-F_t(\widetilde z_t(\lambda_t)). \]
Note, that in (\ref{2.4A}) $\lambda$ is fixed, but optimal solutions $\lambda_t^*$ of the dual problems
\begin{align} \label{2.2}
G_t(\lambda)=\sum_{i=1}^m v_{t,i}(\lambda)-\sum_{i=1}^{n} u_{t,i}(\lambda)\to\min_{\lambda\in\mathbb R^d},
\end{align}
\[ v_{t,i}(\lambda)=\sup_{y_i\in Y_i}(\langle\lambda,y_i)-g_{t,i}(y_i)),\quad u_{t,i}(\lambda)=\inf_{x_i\in X_i}(\langle\lambda,x_i\rangle - f_{t,i}(x_i))\]
depend on $t$. Thus, we cannot apply (\ref{1.10}), (\ref{1.14}) to pass to the regret, related to the residuals in the primal problem, as in the proof of Theorem \ref{th:3}. 

Recall that the notation $X_t=O_{\mathsf P}(c_t)$, where $X_t$ are random variables, $\mathsf P$ is a probability measure, and $c_t>0$ are constants, means that  the sequence $X_t/c_t$ is stochastically bounded. That is, for any $\varepsilon>0$ there exists $C>0$, $t_0>0$ such that
\[ \mathsf P(|X_t|/c_t\ge C)\le\varepsilon,\quad t\ge t_0.\]

It is easy to see that Lemma \ref{lem:1} is still holds true for the iterations 
\begin{align} \label{2.3}
\lambda_t=-\frac{\sum_{j=1}^{t-1}\Delta\widetilde z_t(\lambda_j)}{\sqrt{\sum_{j=1}^{t-1} \|\Delta\widetilde z_j(\lambda_j)\|^2}},\quad  \lambda_0=0,
\end{align}
constructed for time-dependent functions $f_{i,t}$, $g_{i,t}$. By this lemma the sequence $\lambda_t$ is uniformly bounded:
\begin{align} \label{2.5}
 \|\lambda_t\|\le b\sqrt{d},\quad b:=K'+1.
\end{align} 

\begin{theorem} \label{th:4}
Assume additionally that the revenue and cost functions are of the form
\[ f_{t,i}(x_i)=f_i(x_i,\xi_{t,i}),\quad g_{t,i}(y_i)=g_i(y_i,\eta_{t,i}),\]
where $(\xi_t,\eta_t)\in\Theta\subset\mathbb R^m\times\mathbb R^n$ is a sequence of i.i.d. random vectors such that 
\begin{align} \label{2.10A}
\mathsf E \sup_{x_i\in X_i} f_i(x_i,\xi_{t,i})<\infty, \quad \mathsf E\sup_{y_i\in Y_i} g_i(y_i,\eta_{t,i})<\infty.
\end{align}
Then the price sequence (\ref{2.3}) generated by the SOLO FTRL algorithm, ensures no-regret learning with respect to the best possible plan sequence $z_t^*$:
\begin{align} \label{2.11}
 \frac{1}{T}\sum_{t=1}^T(F_t(z_t^*)-F_t(\widetilde z_t(\lambda_t))\to 0\quad \mathrm{a.s},\quad T\to\infty, 
\end{align}
and the estimate 
\begin{align} \label{2.12}
 \frac{1}{T}\sum_{t=1}^T(F_t(z_t^*)-F_t(\widetilde z_t(\lambda_t))=O_{\mathsf P}\left(\frac{1}{T^{1/4}}\right).
 \end{align}
 The  equilibrium between the supply and demand is satisfied on average:
\begin{equation} \label{2.12A}
\frac{1}{T}\sum_{t=1}^T\Delta \widetilde z_t(\lambda_t)\to 0\quad \mathrm{a.s.},\quad T\to\infty,
\end{equation}
\begin{equation} \label{2.12B}
\frac{1}{T}\sum_{t=1}^T\Delta \widetilde z_t(\lambda_t)=O_{\mathsf P}\left(\frac{1}{T^{1/4}} \right).
\end{equation}
\end{theorem}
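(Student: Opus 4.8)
The plan is to stay entirely on the dual side, where the SOLO FTRL bound (\ref{2.4A}) lives, and to compensate for the fact that the per-step dual optima $\lambda_t^*$ drift with $(\xi_t,\eta_t)$ by comparing everything against one deterministic price: the minimizer of the \emph{expected} dual function. So I would first set $\bar G(\lambda):=\mathsf E\,G_1(\lambda)$; by (\ref{2.10A}) this is a finite convex function, it is coercive (already one conjugate in (\ref{2.2}) forces $G_t(\lambda)\to+\infty$ uniformly as $\|\lambda\|\to\infty$), hence it has a minimizer $\lambda^\circ$, and rerunning the Karush--Kuhn--Tucker argument of Theorem \ref{th:1} on the averaged functions $\bar f_i=\mathsf E f_i(\cdot,\xi_{1,i})$, $\bar g_i=\mathsf E g_i(\cdot,\eta_{1,i})$ (which again satisfy Assumptions 1--3) gives $\lambda^\circ\in\mathbb R^d_+$. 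The comparator $z_t^*$ is the plan stimulated at stage $t$ by this single best fixed price, $z_t^*=\widetilde z_t(\lambda^\circ)$ — the ``best possible plan sequence'' meant in the theorem; since $\bar G$ is differentiable with $\nabla\bar G(\lambda)=\mathsf E\,\Delta\widetilde z_1(\lambda)$ we have $\mathsf E\,\Delta\widetilde z_t(\lambda^\circ)=0$. Using only $\widetilde z_t(\lambda)=\arg\max_{z\in Z}L_t(z,\lambda)$ — optimality of $\widetilde z_t(\lambda_t)$ in $L_t(\cdot,\lambda_t)$ for the upper inequality, of $z_t^*$ in $L_t(\cdot,\lambda^\circ)$ for the lower one — I get the two-sided Lagrangian estimate
\begin{equation*}
\langle\lambda^\circ,\,\Delta\widetilde z_t(\lambda_t)-\Delta\widetilde z_t(\lambda^\circ)\rangle\ \le\ F_t(z_t^*)-F_t(\widetilde z_t(\lambda_t))\ \le\ \langle\lambda_t,\,\Delta\widetilde z_t(\lambda_t)-\Delta\widetilde z_t(\lambda^\circ)\rangle .
\end{equation*}

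Next I would sum over $t\le T$ and abbreviate $v_t:=\Delta\widetilde z_t(\lambda^\circ)=\nabla G_t(\lambda^\circ)$ and $L_T:=\sum_{t\le T}\Delta\widetilde z_t(\lambda_t)$. The right-hand side is $\sum_t\langle\lambda_t,\Delta\widetilde z_t(\lambda_t)\rangle-\sum_t\langle\lambda_t,v_t\rangle$, and the first sum is exactly the SOLO FTRL linear regret against the comparator $0$, so it is $\le 6.25\,(m+n)c\sqrt d\,\sqrt T$ by \cite[Theorem 1]{Orabona2018} and the gradient bound (\ref{2.4}). The left-hand side is $\langle\lambda^\circ,L_T\rangle-\sum_t\langle\lambda^\circ,v_t\rangle$, and from the explicit iterate formula (\ref{2.3}) together with (\ref{2.5}) and (\ref{2.4}),
\begin{equation*}
\|L_T\|=\|\lambda_{T+1}\|\,\Big(\textstyle\sum_{t\le T}\|\nabla G_t(\lambda_t)\|^2\Big)^{1/2}\ \le\ b\,(m+n)c\,d\,\sqrt T ,
\end{equation*}
which after division by $T$ is already the equilibrium statements (\ref{2.12A}), (\ref{2.12B}).

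It then remains to dispose of the two ``noise'' sums, and this is where the i.i.d.\ assumption enters. The $v_t$ are i.i.d., bounded by $(m+n)c\sqrt d$ and centered, so $\frac1T\sum_{t\le T}v_t\to0$ a.s.\ (Kolmogorov) and $\sum_{t\le T}v_t=O_{\mathsf P}(\sqrt T)$ (Chebyshev/CLT). The numbers $d_t:=\langle\lambda_t,v_t\rangle$ form a bounded martingale-difference sequence for $\mathcal F_t=\sigma(\xi_s,\eta_s:s\le t)$, because $\lambda_t$ is $\mathcal F_{t-1}$-measurable while $v_t$ is independent of $\mathcal F_{t-1}$ with mean $0$; hence $\frac1T\sum_{t\le T}d_t\to0$ a.s.\ and $\sum_{t\le T}d_t=O_{\mathsf P}(\sqrt T)$. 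Dividing the summed two-sided estimate by $T$, the upper side is $\le 6.25\,(m+n)c\sqrt d\,T^{-1/2}-\frac1T\sum d_t$ and the lower side is $\ge\frac1T\langle\lambda^\circ,L_T\rangle-\frac1T\langle\lambda^\circ,\sum v_t\rangle$; both tend to $0$ a.s., which is (\ref{2.11}), and both are $O_{\mathsf P}(T^{-1/2})$, which gives (\ref{2.12}) and a fortiori the claimed $O_{\mathsf P}(T^{-1/4})$. The equilibrium bounds (\ref{2.12A}), (\ref{2.12B}) are already contained in the estimate for $\|L_T\|$.

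The main obstacle is the one flagged just before the theorem: Theorem \ref{th:3} works because a single dual-gap estimate $G(\cdot)-G(\lambda^*)$ can be pushed through the strong-convexity/smoothness inequalities (\ref{1.10}), (\ref{1.14}), and here no stationary $\lambda^*$ exists. The remedy I would use — compare to the \emph{population} price $\lambda^\circ$ and split the per-step discrepancy $\Delta\widetilde z_t(\lambda_t)-\Delta\widetilde z_t(\lambda^\circ)$ into a piece killed by the online-learning regret (tested against $\lambda_t$) and a piece killed by the strong law of large numbers / martingale convergence (tested against $\lambda^\circ$) — is, I expect, the crux of the author's argument too. Two things worth flagging: the integrability hypothesis (\ref{2.10A}) is needed precisely so that $\bar G$, hence $\lambda^\circ$, is well defined; and the ``elementary'' bound $\|L_T\|=O(\sqrt T)$ is not a throwaway remark but is exactly what controls the $\langle\lambda^\circ,L_T\rangle$ term, so the no-regret conclusion and the supply--demand balance are genuinely intertwined.
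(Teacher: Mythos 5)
Your overall strategy is the right one and largely coincides with the paper's: introduce the minimizer $\lambda^\circ$ of the expected dual $\mathsf E G_t$ (the paper's $\overline\lambda$), use $\mathsf E\nabla G_t(\lambda^\circ)=0$, and split the error into a piece controlled by the SOLO FTRL regret bound and a piece controlled by a bounded martingale-difference SLLN / Azuma--Hoeffding. Your treatment of the equilibrium claims (\ref{2.12A}), (\ref{2.12B}) is in fact \emph{simpler and stronger} than the paper's: reading (\ref{2.3}) backwards as $\sum_{t\le T}\Delta\widetilde z_t(\lambda_t)=-\lambda_{T+1}\bigl(\sum_{t\le T}\|\Delta\widetilde z_t(\lambda_t)\|^2\bigr)^{1/2}$ and invoking Lemma \ref{lem:1} and (\ref{2.4}) gives a deterministic $O(T^{-1/2})$ bound, whereas the paper routes this through the decomposition $\nabla G_t(\lambda_t)=\nabla G_t(\overline\lambda)+(\nabla G_t(\lambda_t)-\nabla G_t(\overline\lambda))$, the smoothness inequality (\ref{2.16}) and the regret bound, and only obtains $O_{\mathsf P}(T^{-1/4})$.

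The genuine gap is the comparator. You declare $z_t^*=\widetilde z_t(\lambda^\circ)$, the plan stimulated by the best \emph{fixed} price, but in the theorem $z_t^*$ is the per-stage optimal feasible plan, $z_t^*=\arg\max_{z\in S}F_t(z)$, so that $F_t(z_t^*)=G_t(\lambda_t^*)=\min_\lambda G_t(\lambda)$ by strong duality; the introduction stresses that this ``best possible plan sequence'' is a strong comparator, and the paper's first step is precisely $F_t(z_t^*)-F_t(\widetilde z_t(\lambda_t))=G_t(\lambda_t^*)-G_t(\lambda_t)+\langle\lambda_t,\nabla G_t(\lambda_t)\rangle\le\langle\lambda_t,\nabla G_t(\lambda_t)\rangle$. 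Your two-sided Lagrangian sandwich is correct for \emph{your} comparator but does not address the stated one: $\widetilde z_t(\lambda^\circ)$ is in general infeasible and is neither above nor below $\max_{z\in S}F_t(z)$ pathwise. The repair is available inside your own framework — weak duality $F_t(z_t^*)\le G_t(\lambda_t)=F_t(\widetilde z_t(\lambda_t))+\langle\lambda_t,\Delta\widetilde z_t(\lambda_t)\rangle$ replaces your upper inequality, and the resulting sum $\sum_t\langle\lambda_t,\Delta\widetilde z_t(\lambda_t)\rangle$ is exactly the linearized regret against the comparator $0$ that you already bound by $6.25(m+n)c\sqrt d\,\sqrt T$ — but as written the argument proves a different theorem. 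Two smaller points: coercivity of $G_t$ needs \emph{both} families of conjugates (the $v_{t,i}$ only give growth along $\lambda_j>0$, the $-u_{t,i}$ handle $\lambda_j<0$, as in the paper's estimate via $x_\varepsilon,y_\varepsilon$), and since $\mathsf E G_t$ is not the dual of the averaged primal (expectation and supremum do not commute), your KKT argument for $\lambda^\circ\ge0$ does not apply — though nonnegativity of $\lambda^\circ$ is never actually needed. Finally, note that for the lower-tail half of the $O_{\mathsf P}$ claims with the paper's comparator one would additionally have to control $G_t(\lambda_t^*)-G_t(\lambda_t)\le 0$ from below; the paper's own proof only bounds the upper tail, so you are not worse off there.
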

\begin{proof} (1) From the definition (\ref{2.2}) of the dual objective functions we get
\[G_t(\lambda)=F_t(\widetilde z_t(\lambda))-\langle \lambda,\Delta\widetilde z_t(\lambda)\rangle,\quad \Delta\widetilde z_t(\lambda):=\sum_{i=1}^m\widetilde y_{t,i}(\lambda)-\sum_{i=1}^n\widetilde x_{t,i}(\lambda).\] 
Since $\Delta\widetilde z_t(\lambda)=\nabla G_t(\lambda)$, and $\lambda_t^*$ is a minimum point of $G_t$, using the strong duality we obtain the inequality
\begin{align*}
&F_t(z^*_t)-F_t(\widetilde z_t(\lambda_t))=G_t(\lambda^*_t)-G_t(\lambda_t)-\langle\lambda_t,\nabla G_t(\lambda_t)\rangle \nonumber \\
&\le -\langle\lambda_t,\nabla G_t(\lambda_t)\rangle=-\langle\lambda_t,\nabla G_t(\lambda)\rangle+\langle\lambda_t,\nabla G_t(\lambda)-\nabla G_t(\lambda_t)\rangle \nonumber\\
&\le -\langle\lambda_t,\nabla G_t(\lambda)\rangle+\|\lambda_t\|\cdot\|\nabla G_t(\lambda)-\nabla G_t(\lambda_t)\| 
\end{align*}
for any $\lambda$. Take the average:
\begin{align} 
&\frac{1}{T}\sum_{t=1}^T (F_t(z^*_t)-F_t(\widetilde z_t(\lambda_t)))\le -\frac{1}{T}\sum_{t=1}^T
\langle\lambda_t,\nabla G_t(\overline\lambda)\rangle+\frac{1}{T}\sum_{t=1}^T\|\lambda_t\|\cdot\|\nabla G_t(\overline\lambda)-\nabla G_t(\lambda_t)\| \nonumber\\
&\le -\frac{1}{T}\sum_{t=1}^T
\langle\lambda_t,\nabla G_t(\overline\lambda)\rangle+\sqrt{\frac{1}{T}\sum_{t=1}^T\|\lambda_t\|^2}\sqrt{\frac{1}{T}\sum_{t=1}^T\|\nabla G_t(\overline\lambda)-\nabla G_t(\lambda_t)\|^2}. \label{2.18}
\end{align}

(2) Let us prove the existence of $\overline\lambda$, satisfying the equation
\begin{align} \label{2.14}
\mathsf E\mathsf \nabla G_t(\overline\lambda)=0.
\end{align}
From (\ref{2.10A}) it follows that $\mathsf E G_t(\lambda)<\infty$, since $G_t=\sum_{i=1}^m v_{t,i}-\sum_{i=1}^{n} u_{t,i}$,
\[ u_{t,i}(\lambda)\le\sup_{x_i\in X_i}\langle\lambda,x_i\rangle +\sup_{x_i\in X_i} f_{t,i}(x_i),\quad  
v_{t,i}(\lambda)\le\sup_{y_i\in Y_i}\langle\lambda,y_i\rangle+\sup_{y_i\in Y_i} g_{t,i}(y_i).\]
Put
\[ J_+(\lambda)=\{j:\lambda_j>0\},\quad J_{-}(\lambda)=\{j:\lambda_j<0\}, \]
\[ x_\varepsilon(\lambda)=\varepsilon\sum_{j\in J_{-}(\lambda)} e_j,\quad y_\varepsilon(\lambda)=\varepsilon\sum_{j\in J_{+}(\lambda)} e_j,\]
where $(e_1,\dots,e_d)$ is the standard basis of $\mathbb R^d$. Then
\begin{align*}
G_t(\lambda)&=\sum_{i=1}^m\sup_{x_i\in X_i}\{f_{t,i}(x_i)-\langle\lambda,x_i\rangle\}+\sum_{i=1}^n\sup_{y_i\in Y_i}\{\langle\lambda,y_i\rangle-g_{t,i}(y_i)\}\\
&\ge\sum_{i=1}^m(f_{t,i}(x_\varepsilon)-\langle\lambda,x_\varepsilon\rangle)+ \sum_{i=1}^n(\langle\lambda,y_\varepsilon\rangle-g_{t,i}(y_\varepsilon))\\
&\ge -m\varepsilon\sum_{j\in  J_{-}(\lambda)}\lambda_j+n\varepsilon \sum_{j\in J_{+}(\lambda)}\lambda_j-\sum_{i=1}^n g_{t,i}(y_\varepsilon)\\
&\ge\varepsilon\min\{m,n\}\sum_{j=1}^d|\lambda_j|-\sum_{i=1}^n\sup_{y_i\in Y_i} g_{t,i}(y_i).
\end{align*}
It follows that $G_t(\lambda)\to+\infty$ as $\|\lambda\|\to\infty$. Furthermore, since $G_t$ are non-negative:
\[ G_t(\lambda)\ge G_t(\lambda^*_t)\ge F_t(0)=0,\]
we can apply Fatou's lemma to get
\[  \liminf_{t\to\infty}\mathsf E G_t(\lambda)\ge\mathsf E  \liminf_{t\to\infty} G_t(\lambda)=\infty.\]
This implies the existence of a (global) minimum point $\overline\lambda$ of the function $\lambda\mapsto\mathsf E G_t(\lambda)$. Note that we can take $\overline\lambda$ independent of $t$, since $\xi_t,\eta_t$ are identically distributed, and $\mathsf E G_t(\lambda)$ does not depend on $t$.

As $\nabla G_t(\lambda)$ is uniformly  bounded, we can interchange the expectation and differentiation (see \cite[Proposition 2.3]{Asmussen2007}):
$\nabla \mathsf E G_t(\overline\lambda)=\mathsf E\nabla G_t(\overline\lambda)$.
 By the optimality criterion we get (\ref{2.14}):
\[ \nabla \mathsf E G_t(\overline\lambda)=\mathsf E\nabla G_t(\overline\lambda)=0.\]

(3) Assume that $(\xi_t,\eta_t)$ are defined on some probability space $(\Omega,\mathscr F,\mathsf P)$. Let $\mathscr F_t$ be the $\sigma$-algebra generated by $((\xi_j,\eta_j))_{j=1}^t$, $t\ge 1$, and put $\mathscr F_{-1}=\mathscr F_0=\{\emptyset,\Omega\}$. Then $U_t=\langle\lambda_t,\nabla G_t(\overline\lambda)\rangle$ forms the martingale difference sequence with respect to the filtration $\mathscr F_t$:
\[ \mathsf E\left(\langle\lambda_t,\nabla G_t(\overline\lambda)\rangle|\mathscr F_{t-1}\right)=\langle \lambda_t,\mathsf E(\mathsf \nabla G_t(\overline\lambda) |\mathscr F_{t-1})\rangle=\langle \lambda_t,\mathsf E\mathsf \nabla G_t(\overline\lambda)\rangle= 0.\]
We used the fact that $\lambda_t$ is $\mathscr F_{t-1}$-measurable, and $G_t$ is independent from $\mathscr F_{t-1}$. Since, by (\ref{2.4}), (\ref{2.5}), 
\[ \sum_{t=1}^\infty\frac{\mathsf E U_t^2}{t^2}\le
\sum_{t=1}^\infty\frac{\mathsf E \left(\|\lambda_t\|^2 \|\nabla G_t\|^2\right)}{t^2}\le (c b d (m+n))^2\sum_{t=1}^\infty\frac{1}{t^2}<\infty,   \]
the martingale $\sum_{j=1}^t U_j/j$ is bounded in $L^2$. It follows that $\sum_{j=1}^t U_j/j$ converges a.s. \cite[Chap.\,12]{Williams1991}. From the Kronecker lemma \cite[Chap IV, \S3]{Shiryaev1996} we get the strong law of large numbers:
\begin{align} \label{2.15}
 \frac{1}{T}\sum_{t=1}^T U_t=\frac{1}{T}\sum_{t=1}^T\langle\lambda_t,\nabla G_t(\overline\lambda)\rangle\to 0\quad \mathrm{a.s.} 
\end{align}

Consider the inequality (\ref{1.12}): 
\begin{align} \label{2.16}
 G_t(\lambda_t)-G_t(\overline\lambda)\ge\langle\nabla G_t(\overline\lambda),\lambda_t-\overline\lambda\rangle+\frac{1}{2\kappa}\|\nabla G_t(\lambda_t)-\nabla G_t(\overline\lambda)\|^2.
 \end{align}
Applying the relations (\ref{2.4A}) and (\ref{2.15}), noting that the last one also holds true with $\overline\lambda$ instead of $\lambda_t$, we get
\begin{align} \label{2.17}
 \frac{1}{T}\sum_{t=1}^T\|\nabla G_t(\lambda_t)-\nabla G_t(\overline\lambda)\|^2
\to 0\quad\textrm{a.s.}
\end{align}
We see that the assertion (\ref{2.11}) follows from (\ref{2.18}), (\ref{2.15}), (\ref{2.17}), since $\lambda_t$ are uniformly bounded. 

The ``average equilibrium`` property (\ref{2.12A}) also follows immediately. We have
\begin{align*}
\frac{1}{T}\sum_{t=1}^T\Delta \widetilde z_t(\lambda_t)=\frac{1}{T}\sum_{t=1}^T\nabla G_t(\lambda_t)=
\frac{1}{T}\sum_{t=1}^T\nabla G_t(\overline\lambda)+\frac{1}{T}\sum_{t=1}^T (\nabla G_t(\lambda_t)-\nabla G_t(\overline\lambda)).
\end{align*}
The first term conveges to zero a.s. by the mentioned strong law of large numbers. The second term converges  to zero a.s. by  (\ref{2.17}):
\[ \frac{1}{T}\|\sum_{t=1}^T (\nabla G_t(\lambda_t)-\nabla G_t(\overline\lambda))\|\le\sqrt{\frac{1}{T}\sum_{t=1}^T\|\nabla G_t(\lambda_t)-\nabla G_t(\overline\lambda)\|^2}.\]

(4) To prove (\ref{2.12}) we again use (\ref{2.18}). Since $\lambda_t$ are uniformly bounded: $\|\lambda_t\|\le b\sqrt{d}$, for any $A>0$ we have
\begin{align*}
&\mathsf P\left(\frac{1}{T}\sum_{t=1}^T (F_t(z^*_t)-F_t(\widetilde z_t(\lambda_t)))\ge\frac{A}{T^{1/4}}\right)\le \mathsf P\left( -\frac{1}{T}\sum_{t=1}^T
\langle\lambda_t,\nabla G_t(\overline\lambda)\rangle\ge \frac{A}{2 T^{1/4}} \right)\\
&+\mathsf P\left(\sqrt{\frac{1}{T}\sum_{t=1}^T\|\nabla G_t(\overline\lambda)-\nabla G_t(\lambda_t)\|^2} \ge \frac{A}{2b\sqrt{d} T^{1/4}}\right)
\end{align*}
By the inequalities (\ref{2.16}) and (\ref{2.4A}) the condition
\begin{align} \label{2.18A}
\frac{1}{T}\sum_{t=1}^T\|\nabla G_t(\overline\lambda)-\nabla G_t(\lambda_t)\|^2\ge\frac{A^2}{4 b^2 d T^{1/2}}  
\end{align}
implies that
\begin{align}
&-\frac{1}{T}\sum_{t=1}^T\langle\lambda_t,\nabla G_t(\overline\lambda)\rangle\ge \frac{A^2}{8\kappa b^2 d \sqrt T}-\frac{1}{T}\sum_{t=1}^T(G_t(\lambda_t)-G_t(\overline\lambda))\nonumber\\
&\ge \left(\frac{A^2}{8\kappa b^2 d}- c (m+n) \sqrt{d}\left(\|\overline\lambda\|^2/2+6.25\right)\right)\frac{1}{\sqrt T}\ge \frac{A^2}{16\kappa b^2 d} \frac{1}{\sqrt T} \label{2.18B}
\end{align}
for sufficiently large $A$:
\[ \frac{A^2}{16\kappa b^2 d}\ge c (m+n) \sqrt{d}\left(\|\overline\lambda\|^2/2+6.25\right).\]
Hence,
\begin{align}
&\mathsf P\left(\frac{1}{T}\sum_{t=1}^T (F_t(z^*_t)-F_t(\widetilde z_t(\lambda_t)))\ge\frac{A}{T^{1/4}}\right)\le \mathsf P\left( -\frac{1}{T}\sum_{t=1}^T
\langle\lambda_t,\nabla G_t(\overline\lambda)\rangle\ge \frac{A}{2 T^{1/4}} \right)\nonumber\\
&+\mathsf P\left(-\frac{1}{T}\sum_{t=1}^T\langle\lambda_t,\nabla G_t(\overline\lambda)\rangle\ge  \frac{A^2}{16\kappa b^2 d} \frac{1}{\sqrt T}\right). \label{2.19}
\end{align}

Furthermore, in view of the estimate
\[ |\langle\lambda_t,\nabla G_t(\overline\lambda)\rangle|\le\|\lambda_t\|\cdot\|\nabla G_t(\overline\lambda)\|\le M:=cb (m+n) d\]
we can apply by the Azuma-Hoeffding inequality (see \cite[Proposition 10.5.1]{Lange2010}):
\begin{align} \label{2.21}
 \mathsf P\left(-\frac{1}{T}\sum_{t=1}^T \langle\lambda_t,\nabla G_t(\overline\lambda)\rangle\ge B\right) \le\exp\left(-\frac{B^2 T^2}{2\sum_{t=1}^T M^2}\right)=\exp\left(-\frac{1}{2}\frac{B^2}{M^2} T\right).
 \end{align}
Applying this inequality to each term in the right-hand side of (\ref{2.19}), we get
\begin{align*}
\mathsf P\left(\frac{1}{T}\sum_{t=1}^T (F_t(z^*_t)-F_t(\widetilde z_t(\lambda_t)))\ge\frac{A}{T^{1/4}}\right) &\le \exp\left(-\frac{A^2}{8 M^2}\sqrt{T}\right)\\
&+\exp\left(-\frac{1}{512}\frac{A^4}{M^2\kappa^2 b^4 d^2}\right)
\end{align*}
for sufficiently large $A$. This implies (\ref{2.12}). 

(5) To prove (\ref{2.12B}) consider the representation
\[ \frac{1}{T}\sum_{t=1}^T\Delta\widetilde z_t(\lambda_t)= \frac{1}{T}\sum_{t=1}^T\nabla G_t(\lambda_t)=\frac{1}{T}\sum_{t=1}^T\nabla G_t(\overline\lambda)+\frac{1}{T}\sum_{t=1}^T(\nabla G_t(\lambda_t)-\nabla G_t(\overline\lambda)).\]
Since $G_t(\overline\lambda)$ are i.i.d. and $\mathsf E\nabla G_t(\overline\lambda)=0$, from the central limit theorem it follows that 
\[\frac{1}{T}\sum_{t=1}^T\nabla G_t(\overline\lambda)=O_\mathsf P\left(\frac{1}{\sqrt T}\right). \]
To get the estimate
\begin{align} \label{2.22}
\frac{1}{T}\sum_{t=1}^T(\nabla G_t(\lambda_t)-\nabla G_t(\overline\lambda))= O_\mathsf P\left(\frac{1}{T^{1/4}}\right)
\end{align}
we note that the inequatilty
\[ \left\| \frac{1}{T}\sum_{t=1}^T(\nabla G_t(\lambda_t)-\nabla G_t(\overline\lambda))\right\|\ge \frac{C}{T^{1/4}} \]
implies the inequality similar to (\ref{2.18A}):
\[  \sum_{t=1}^T\|\nabla G_t(\lambda_t)-\nabla G_t(\overline\lambda)\|^2\ge C^2 T^{1/2}.\]
Applying the inequalities similar to (\ref{2.18B}), (\ref{2.21}), we get (\ref{2.22}). The proof is complete.
$\square$  
\end{proof}

\section{Computer experiments} \label{sec:4}

We performed two computer experiments. In the first one we consider only one commodity, and the revenues and costs $f_i$, $g_i$ are shifted power functions. In the second experiment we consider two commodities, and $f_i$, $g_i$ are quadratic. The parameters of the revenue and cost functions are selected randomly. In the static problems these parameters are sampled only once and are fixed at all iterations. In the dynamic problem at each iteration they are independently sampled from the same distribution.

In the static case the results show fast stabilization of the transfer prices and the difference between the supply and demand. In the dynamic case the same quantities fluctuate around equilibrium after a short transition phase. The code is available at \url{https://github.com/drokhlin/Transfer_prices/}.

\begin{example}
For a single commodity ($d=1$) consider $m=15$ sales divisions and $n=25$ production divisions with the revenue and cost functions
\[ f_i(x_i)=\frac{A_i}{\alpha_i}(x_i+\varepsilon_{1,i})^{\alpha_i}-\frac{A_i}{\alpha_i}\varepsilon_{1,i}^{\alpha_i},\quad
 g_i(y_i)=\frac{B_i}{\beta}(y_i+\varepsilon_{2,i})^{\beta_i}-\frac{B_i}{\beta_i}\varepsilon_{2,i}^{\beta_i}
\]
defined on $X_i=Y_i=[0,c]$, $c=10$. Parameters of these functions were generated as follows:
\[A_i\sim U(0,15),\quad 1-\alpha_i\sim U(0,1),\quad \varepsilon_{1,i}-0.1\sim U(0,1),\]
\[B_i \sim U(0,10),\quad \beta_i-1\sim U(0,3),\quad  \varepsilon_{2,i}-0.1\sim U(0,1),\]
where $U$ is the uniform distribution. The problems (\ref{1.2A}), (\ref{1.2B}) can be solved explicitly:
\[ \widetilde x_i(\lambda)=\begin{cases}
0, & \lambda\ge A_i/\varepsilon_{1,i}^{1-\alpha_i}\\
(A_i/\lambda)^{1/(1-\alpha_i)}-\varepsilon_{1,i}, & \lambda\in \left[A_i/(c+\varepsilon_{1,i})^{1-\alpha_i}, A_i/(\varepsilon_{1,i})^{1-\alpha_i}\right]\\
c, & \lambda\le A_i/(c+\varepsilon_{1,i})^{1-\alpha_i}
\end{cases},
\]
\[ \widetilde y_i(\lambda)=\begin{cases}
0, & \lambda\le B_i\varepsilon_{2,i}^{\beta_i-1}\\
(\lambda/B_i)^{1/(\beta_i-1)}-\varepsilon_{2,i}, & \lambda\in \left[B_i\varepsilon_{2,i}^{\beta_i-1},B_i(c+\varepsilon_{2,i})^{\beta_i-1}\right]\\
c, & \lambda\ge B_i/(c+\varepsilon_{2,i})^{\beta_i-1}
\end{cases}.
\]

In Fig.\,\ref{fig:1} we show the graphs of the transfer price and the difference between the supply and demand in the static problem for one realization of parameters (upper panel), and one realization of the same quantities for the dynamic problem, where the model parameters are sampled at each round (lower panel). 

\begin{figure}[h!]
\centering
\includegraphics[width=\textwidth]{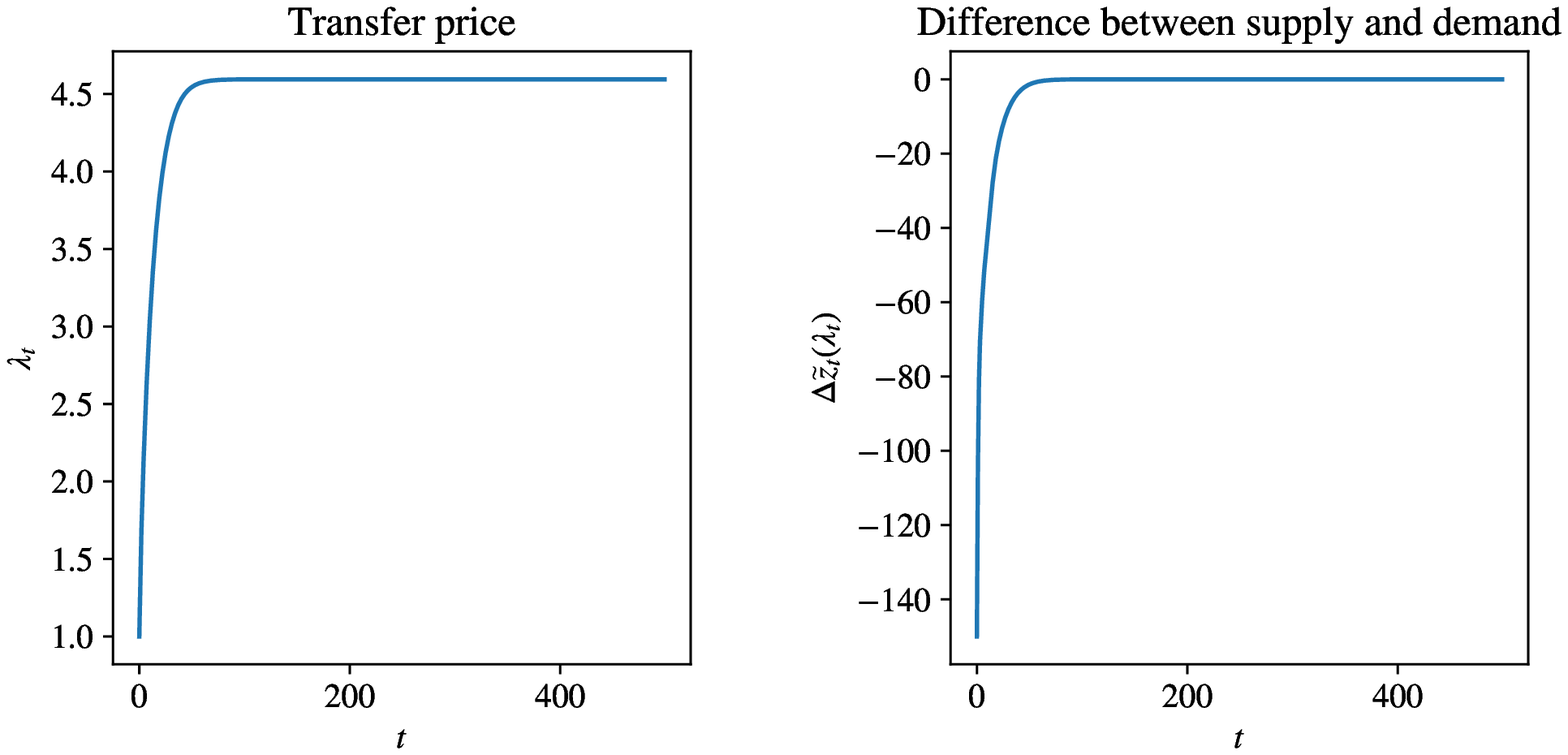}
\includegraphics[width=\textwidth]{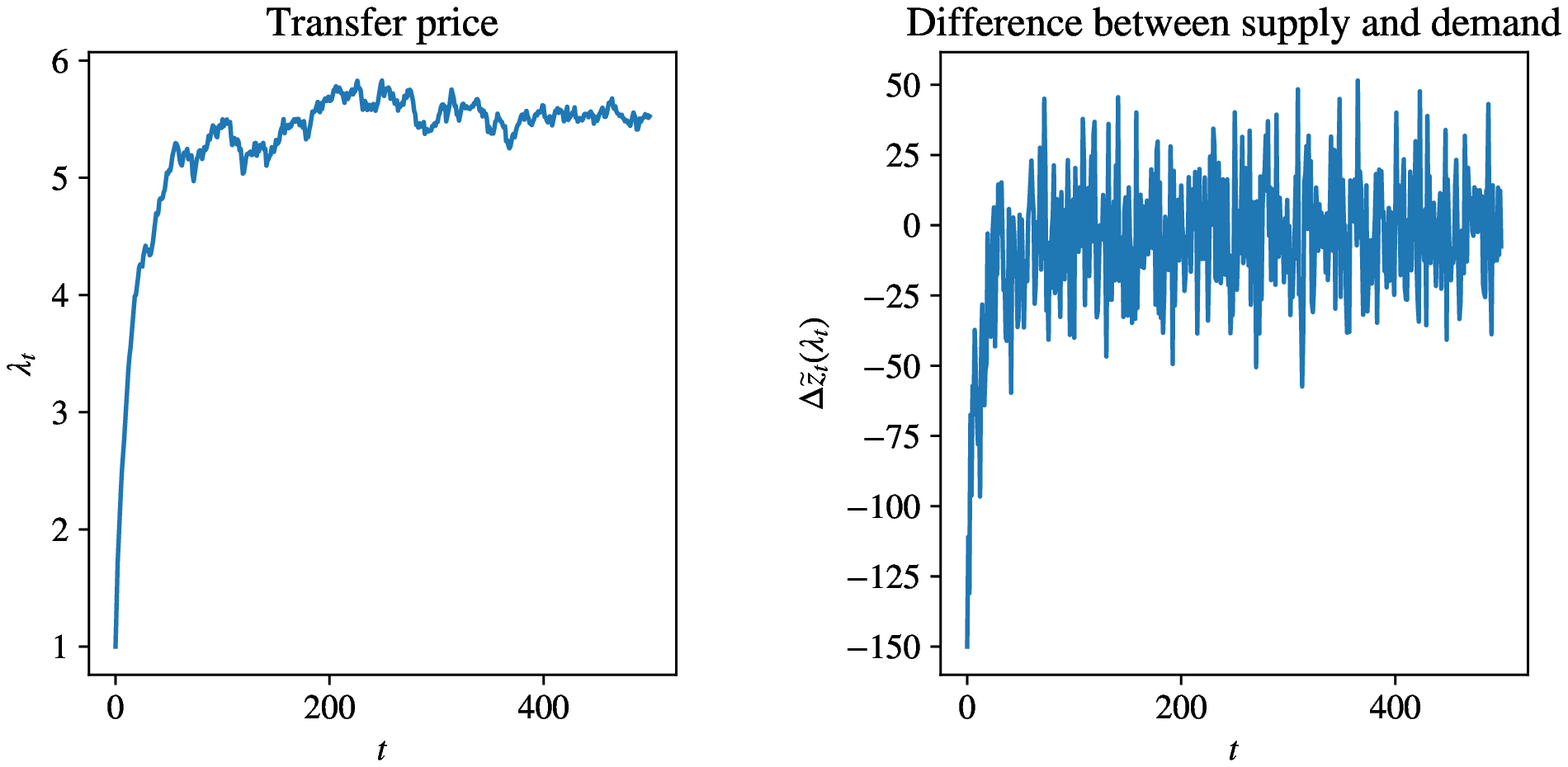}
\caption{One commodity ($d=1$). The transfer price, generated by the SOLO-FTRL algorithm, and the corresponding difference between the supply and demand in the static (upper panel) and dynamic (lower panel) cases.}
 \label{fig:1}
\end{figure}
\end{example}

\begin{example} For two commodities ($d=2$) consider quadratic revenue and cost functions: 
\[ f_i(x_i)=\langle a_i,x_i\rangle-\frac{1}{2}\langle A_i x_i,x_i\rangle,\quad g_i(y_i)=\langle b_i,y_i\rangle+\frac{1}{2}\langle B_i y_i,y_i\rangle,\]
$X_i=Y_i=[0,c]^d$, where symmetric matrices $A_i$, $B_i$ are positive semidefinite. One can regard these expressions as Taylor's approximations of general concave and convex functions near zero. 

To ensure that $f_i$, $g_i$ are non-decreasing in each argument (see Assumption \ref{as:2}) we require that
\[ a_i-A_i x_i \ge 0,\quad b_i+B_i y_i\ge 0\]
componentwise. These condition are satisfied for 
\[ a_i(k)\ge c\sum_{j:A_i(k,j)>0} A_i(k,j), \quad b_i(k)\ge -c\sum_{j:B_i(k,j)<0} B_i(k,j).\]
Here $a_i(k)$, $A_i(k,j)$, etc., are the components of the corresponding vectors and matrices.
In the  experiment we used
\[ A_i, B_i\sim C_i^T C_i+\delta I_d,\]
where the components of $C_i$ are independent standard normal random variables, and
\[ a_i(k)-c\sum_{j:A_i(k,j)>0} A_i(k,j)\sim U(0,1), \quad  b_i(k)+c\sum_{j:B_i(k,j)<0} B_i(k,j)\sim U(0,1).\]

The problems (\ref{1.2A}), (\ref{1.2B}) were solved using the default {\tt cvxpy} solver for quadratic problems. The static and dynamic problems were modelled as explained above. For $m=15$, $n=25$, $c=10$, $\delta=0.1$ the results are shown in Fig.\,\ref{fig:2}

Qualitatively the results are similar to those shown in Fig.,\ref{fig:1}, but now the transfer price and the difference between the supply and demand are two-dimensional. In the static case (upper panel) the limiting prices are substantially different, since they are determined by random parameters of the revenue and cost functions, which are randomly fixed only once. In the dynamic case (lower panel) these parameters are sampled at each iteration from the same distribution, and the prices look similar, since we did not introduced any asymmetry in the underlying distributions.
\end{example}

\begin{figure}
\centering
\includegraphics[width=\textwidth]{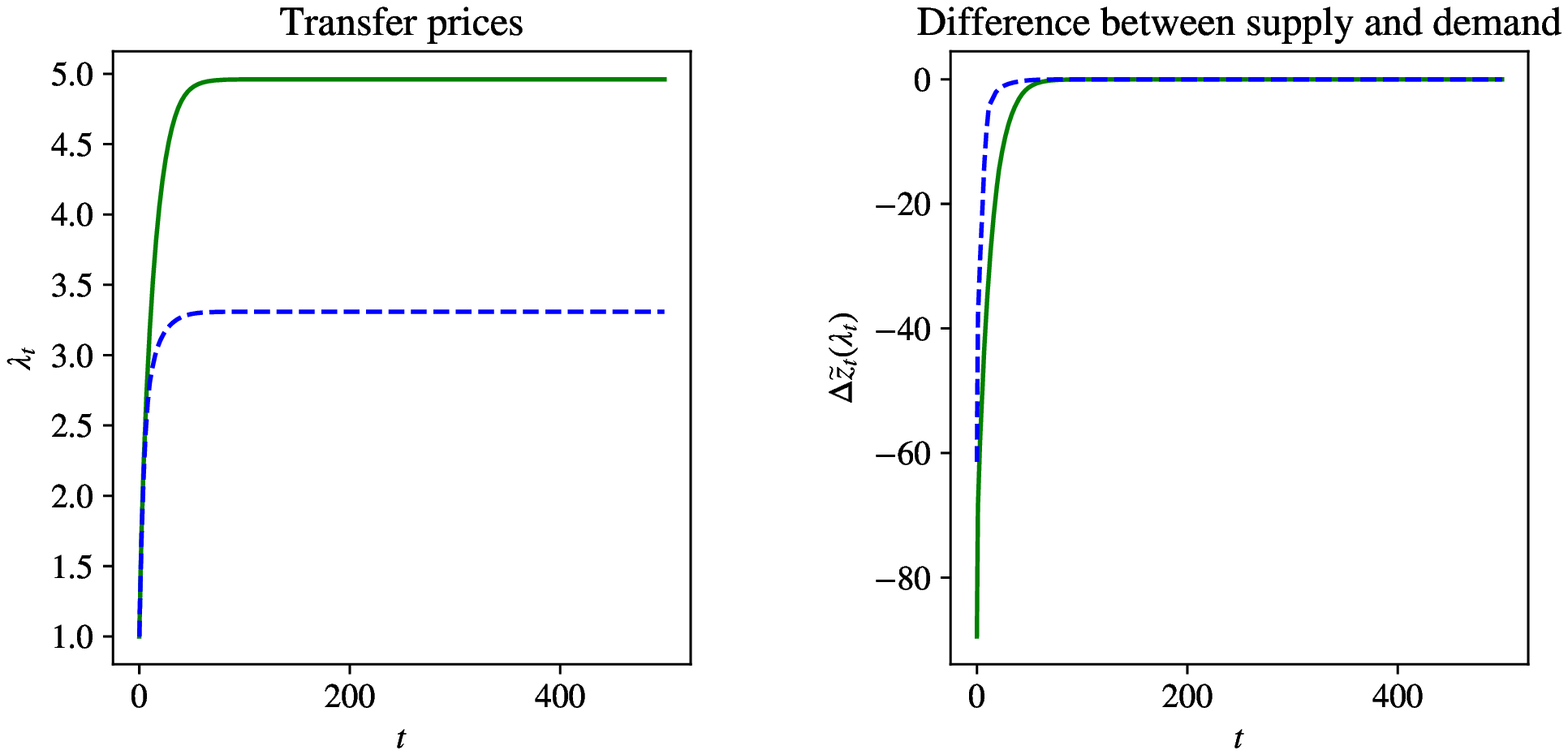}
\includegraphics[width=\textwidth]{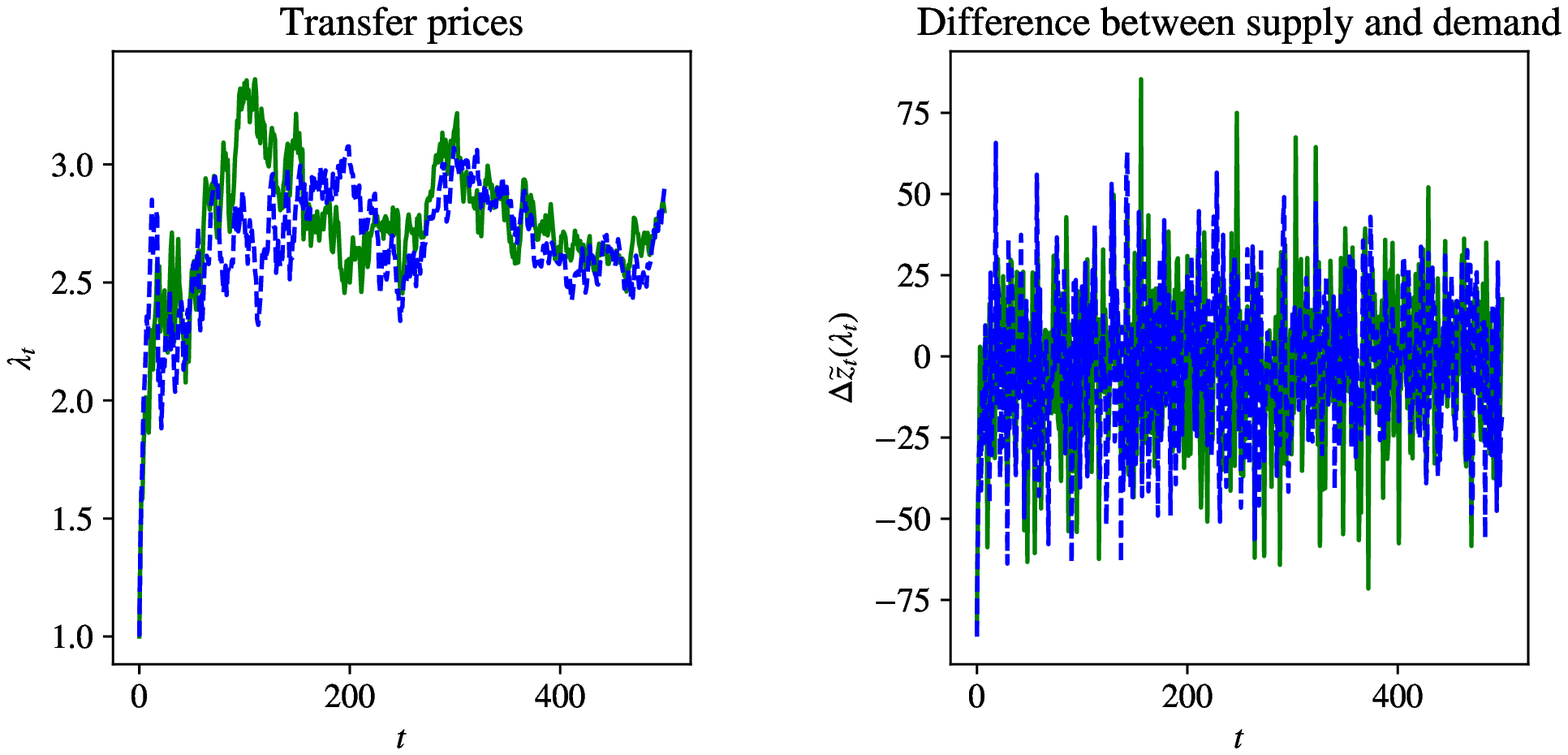}
\caption{Two commodities ($d=2$). The transfer prices and differences between the supply and demand in thte static (upper panel) and dynamic (lower panel) cases.}
 \label{fig:2}
\end{figure}

\section{Conclusion}
We considered a firm, consisting from multiple production and sales divisions, and presented a simple algorithm, producing a sequence of approximately firm-optimal transfer prices without any knowledge of the division revenue and cost functions.  This algorithm is an instance of the SOLO FTRL algorithm of \cite{Orabona2018}, applied to the dual optimization problem. The quantitative estimates are given in Theorems \ref{th:3}, \ref{th:4}.

Let us mention some issues that were completely ignored. Any of them can be a subject of future work.  
\begin{itemize}
\item External market prices. We assumed that trading at transfer prices is the only opportunity for the firm divisions. However, it is not unusual to assume that they can also trade the same commodities at an external market.
\item Information manipulation by the divisions. For instance, the production divisions can produce smaller than the stage-optimal amounts of commodities to raise their transfer prices. They can also cheat, when communicating their costs.
\item Inventory and unmet demand problems. At each time step the difference between supply and demand was a driving force for the transfer prices. In reality, the surplus  commodities should be stored or sold at an external market. Similarly, missing commodities need to be delivered to the sales divisions from a storage or an external market. We implicitly assumed that both operations are available, which need not be the case.
\end{itemize}

\begin{acknowledgement}
The research is supported by the Russian Science Foundation, project 17-19-01038. 
\end{acknowledgement}

\bibliographystyle{plain}      
\bibliography{rokh-ug_lit}

\end{document}